\newtheorem{thm}{Theorem}[section]
\newtheorem*{thm*}{Theorem}
\newtheorem{lem}{Lemma}[section]
\newtheorem*{prop*}{Proposition}
\theoremstyle{remark}
\newtheorem{rem}{Remark}[section]
\newcommand{\R}{\mathbb R}
\numberwithin{equation}{section}
\newcommand{\be}{\begin{equation}}
\newcommand{\ee}{\end{equation}}
\def\p{\partial}
\def\lf{\left}
\def\ri{\right}
\def\Pi{\displaystyle{\mathbb{II}}}
\def\e{\epsilon}
\def\bX{\mathbf{X}}
\def\wt{\widetilde}
\def\a{\alpha}
\def\ol{\overline}
\def\bee{\begin{equation*}}
\def\eee{\end{equation*}}
\def\H{\mathbb{H}}
\def\exp{\mathrm{exp}}
\newcounter{mnotecount}[section]
\begin{document}
\title[]
{Positivity of Brown-York mass with quasi-positive boundary data}

 \author{Yuguang Shi$^1$}
\address[Yuguang Shi]{Key Laboratory of Pure and Applied Mathematics, School of Mathematical Sciences, Peking University, Beijing, 100871, P.\ R.\ China}
\email{ygshi@math.pku.edu.cn}
\thanks{$^1$Research partially supported by NSFC 11671015 and 11731001  }

\author{Luen-Fai Tam$^2$}
\address[Luen-Fai Tam]{The Institute of Mathematical Sciences and Department of
 Mathematics, The Chinese University of Hong Kong, Shatin, Hong Kong, China.}
 \email{lftam@math.cuhk.edu.hk}
\thanks{$^2$Research partially supported by Hong Kong RGC General Research Fund \#CUHK 14301517}

\renewcommand{\subjclassname}{
  \textup{2010} Mathematics Subject Classification}
\subjclass[2010]{Primary 53C20; Secondary 83C99}

\date{February, 2019; revised in \today}

\begin{abstract}
In this short note, we prove positivity of Brown-York mass under quasi-positive boundary data which generalize some previous results by the authors. The corresponding rigidity result is obtained.
\end{abstract}

\keywords{Brown-York mass, quasi-positive ,  nonnegative scalar metrics}

\maketitle

\markboth{Yuguang Shi and Luen-Fai Tam}{Positivity of Brown-York mass}

\section{Introduction}

Let $(\Omega^n,g)$ be a compact manifold with smooth boundary $\p\Omega$. In this work, we always assume that $\Omega$ is connected and orientable. {\it It is an interesting question to understand the relation between  the geometry of $\Omega$ in terms of scalar curvature  and the intrinsic and extrinsic  geometry of $\p\Omega$ in terms of the mean curvature.}  The question is closely related to the notion of quasi-local mass in general relativity. On other hand, given an  compact manifold $(\Sigma,\gamma)$ without boundary and given a smooth function $H$ on $\Sigma$, one   basic problem  in Riemannian geometry is to study: {\it under what kind of conditions so that  $\gamma$ is  induced  by a Riemannian metric $g$ with  nonnegative  scalar curvature, for example, defined on $\Omega^n$, and $H$ is the mean curvature of $\Sigma$ in $(\Omega^n, g)$ with respect to the outward unit normal vector? } These two problems are closely related and there are no satisfactory answers yet.

 In this kind of study, a result was proved    by the authors which implies the positivity of Brown-York quasi-local mass \cite{BrownYork-1,BrownYork-2},  denoted by $\mathfrak{m}_{BY}(\Sigma;\Omega,g)$. For its definition please see (\ref{e-BY}) below.
More specifically, using the quasi-spherical metrics introduced by Bartnik \cite{Bartnik},   in \cite{ST2} the authors proved the following:

 \begin{thm}\label{postiveBYmass}
	Let $(\Omega^3, g)$ be a compact, connected Riemannian manifold with nonnegative scalar curvature, and with compact mean-convex boundary $\p\Omega$, which consists of spheres with positive Gaussian curvature. Then,
	\begin{equation} \label{e-shi-tam}
		\mathfrak{m}_{BY}(\Sigma_\ell;\Omega,g) \geq 0
	\end{equation}
	for each component $\Sigma_\ell \subset \p\Omega$, $\ell = 1, \ldots, k$. Moreover, equality holds for some $\ell = 1, \ldots, k$ if and only if $\p \Omega$ has only one component and $(\Omega, g)$ is isometric to a domain in $\R^3$.
\end{thm}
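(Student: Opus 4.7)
Recall that the Brown-York mass is
\[
\mathfrak{m}_{BY}(\Sigma_\ell;\Omega,g) = \frac{1}{8\pi}\int_{\Sigma_\ell}(H_0 - H)\,d\sigma,
\]
where $H$ is the mean curvature of $\Sigma_\ell$ in $(\Omega,g)$ with respect to the outward normal and $H_0$ is the Euclidean mean curvature of an isometric embedding $\hat\Sigma_\ell \hookrightarrow \R^3$. Such an embedding exists and is unique up to rigid motion by the Weyl embedding theorem, since $(\Sigma_\ell, g|_{\Sigma_\ell})$ is a sphere with positive Gauss curvature. The plan is to construct an asymptotically flat extension of $\Omega$ across $\Sigma_\ell$ with zero scalar curvature whose boundary geometry matches $\Sigma_\ell$ from the $\Omega$ side, then apply the Riemannian positive mass theorem on the glued manifold.

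Because $\hat\Sigma_\ell$ is strictly convex, its exterior $U\subset\R^3$ is smoothly foliated by the outward parallel surfaces $\Sigma_r$, $r\geq 0$; the induced metrics $\sigma_r$ satisfy $H_0(r,\cdot)>0$ and the flat metric on $U$ is $dr^2+\sigma_r$. Following Bartnik, I would seek a quasi-spherical metric
\[
\bar g = u^2\,dr^2 + \sigma_r \qquad (u>0)
\]
on $U$; imposing $R(\bar g)=0$ reduces to a quasilinear parabolic equation for $u$ along the foliation of the schematic form $H_0\,\p_r u = u^2\,\Delta_{\sigma_r} u + \text{l.o.t.}$, which I would solve with initial datum $u|_{r=0}=H_0/H$. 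Mean-convexity of $\p\Omega$ ensures $H>0$, so this initial datum is positive; it is tailored so that the mean curvature of $\hat\Sigma_\ell$ in $\bar g$ equals $H$, matching the $\Omega$ side. Long-time existence with $u>0$ would follow from the maximum principle on each leaf, and a careful asymptotic analysis should yield $u-1=O(1/r)$ so that $(U,\bar g)$ is asymptotically flat with well-defined ADM mass.

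Next I would verify the Shi-Tam monotonicity of the quasi-local mass
\[
m(r) \;:=\; \frac{1}{8\pi}\int_{\Sigma_r} H_0\Bigl(1 - \frac{1}{u}\Bigr)\,d\sigma_r.
\]
Differentiating under the integral, substituting the parabolic equation, and combining $R(\bar g)=0$ with Gauss--Bonnet ($\int_{\Sigma_r} K_{\sigma_r}\,d\sigma_r=4\pi$) should give $m'(r)\leq 0$ with a manifestly non-positive right-hand side controlled by gradient-squared terms in $u$. The initial condition $u(0)=H_0/H$ gives $m(0)=\mathfrak{m}_{BY}(\Sigma_\ell;\Omega,g)$ by direct substitution, and the $O(1/r)$ asymptotics identify $\lim_{r\to\infty} m(r) = m_{\mathrm{ADM}}(\bar g)$. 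Now glue $(U,\bar g)$ to $(\Omega,g)$ along $\hat\Sigma_\ell \leftrightarrow \Sigma_\ell$: the induced metrics agree and the mean curvatures agree from both sides, so by Miao's corner-smoothing argument the glued manifold carries a nearby smooth metric with $R\geq 0$. If $\p\Omega$ has additional components, I would run the same quasi-spherical construction on each of them so that the result is a complete, boundaryless, $k$-ended asymptotically flat manifold; the multi-ended positive mass theorem then gives $m_{\mathrm{ADM}}(\bar g)\geq 0$ at the $\ell$-th end, and hence $\mathfrak{m}_{BY}(\Sigma_\ell;\Omega,g)=m(0)\geq m_{\mathrm{ADM}}(\bar g)\geq 0$.

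For rigidity, equality forces $m(r)\equiv 0$, which in the monotonicity identity forces the gradient-squared terms to vanish and, together with the boundary datum and asymptotics at infinity, propagates back to $u\equiv 1$; thus the extension is flat, and the PMT equality case forces the glued manifold to be isometric to $\R^3$. This excludes extra boundary components and identifies $(\Omega,g)$ with the bounded domain enclosed by $\hat\Sigma_\ell$. The hardest step, I anticipate, is the parabolic analysis in step two — establishing global existence and the sharp $1/r$ decay needed both for asymptotic flatness and for identifying the limit of $m(r)$ with $m_{\mathrm{ADM}}$ — with the multi-component gluing a secondary obstacle, best handled by performing the quasi-spherical construction on every component simultaneously so that the positive mass theorem can be applied in its clean multi-ended form.
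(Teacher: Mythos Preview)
Your outline is correct and is precisely the quasi-spherical-metric approach the paper attributes to \cite{ST2}; note, however, that the present paper does not give a proof of Theorem~\ref{postiveBYmass} at all---it quotes the theorem as an established result from \cite{ST2} and then devotes itself to the quasi-positive generalization (Theorem~\ref{main-quasipositive}). The paper's own description (``using the quasi-spherical metrics introduced by Bartnik~\cite{Bartnik}, in~\cite{ST2} the authors proved the following'') matches your plan step for step: Weyl embedding, Bartnik's parabolic equation on the outward foliation with initial datum $u|_{r=0}=H_0/H$, the monotonicity of $m(r)$, and the positive mass theorem on the glued multi-ended manifold.

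One small caution on the rigidity step: after Miao's smoothing the ADM mass is only close to zero, not exactly zero, so you cannot invoke the smooth PMT equality case directly on the mollified metric. In \cite{ST2} this is handled by proving the PMT (via Witten spinors) directly on the Lipschitz glued metric with matching mean curvatures, so that the equality case applies without smoothing; alternatively one argues that $m(r)\equiv 0$ forces $u\equiv 1$ on the exterior and then shows the interior is flat. Your sketch gestures at both routes but conflates them slightly---worth separating when you write it out.
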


Clearly Theorem \ref{postiveBYmass} provides a necessary condition for a boundary data $(\Sigma, \gamma, H)$ to be the one  induced by a Riemannian metric defined on the ambient manifold and with nonnegative scalar curvature. Here $\gamma$ is a metric on $\Sigma$ with quasi positive Gaussian curvature. The existence of qausi-spherical metric in the proof of the theorem  uses   the fact that the mean curvature is {\it positive} at the boundary. Otherwise, it is unclear if one can construct such kind of metrics, see \cite{Bartnik,ST2004}. With these facts in mind, it is natural to ask if Theorem \ref{postiveBYmass} is still true in a more general context. In this note, we consider the problem in the situation of quasi-positive boundary data. Here a function defined on a set is said to be {\it quasi positive} if it is nonnegative and is positive somewhere. The specific results are the following:

\begin{thm} \label{main-quasipositive}
Let $(\Omega, g)$ be a compact three manifold with smooth boundary $\p\Omega$. Let $\Sigma$ be a component of $\p\Omega$. Assume the following:

\begin{enumerate}
 \item [(a)] $\p\Omega$ has nonnegative mean curvature.
 \item [(b)] $\Sigma$ has quasi positive Gaussian curvature.
 \item [(c)] $(\Omega, g)$ has nonnegative scalar curvature.
 \end{enumerate}
 Then we have:
 \begin{enumerate}
 \item[(i)] \underline{Positivity}:
$
\mathfrak{m}_{BY}(\Sigma;\Omega,g)\ge 0.
$

\item[(ii)] \underline{Rigidity}:
Suppose $\mathfrak{m}_{BY}(\Sigma;\Omega,g)=0$, then $\p\Omega$ is connected, $\Omega$ is  homeomorphic to the unit ball in $\R^3$ and $(\Omega,g)$  is a domain in $\R^3$.
\end{enumerate}

\end{thm}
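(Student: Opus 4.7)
The strategy is to reduce to Theorem \ref{postiveBYmass} (and its underlying quasi-spherical argument) by a two-parameter approximation.

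\emph{Topology.} Quasi-positivity of $K_\gamma$ combined with Gauss--Bonnet gives $\chi(\Sigma) > 0$, hence $\Sigma \cong S^2$. The Brown--York mass remains well-defined: by a Nirenberg/Pogorelov-type theorem, $(\Sigma, \gamma)$ embeds as the boundary of a convex body in $\R^3$ (smoothly where $K > 0$, $C^{1,1}$ where $K = 0$), so $H_0 \in L^\infty$.

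\emph{Smoothing.} Two approximations are needed. First, evolve $\gamma$ under the Ricci flow on $S^2$ for a short time $s > 0$; by Hamilton's strong maximum principle, $\gamma(s)$ has strictly positive Gauss curvature for all $s > 0$, and $\gamma(s) \to \gamma$ smoothly. Second, attach an outer collar $C_\delta = \Sigma \times [0,\delta]$ to $\Omega$ along $\Sigma$, endowed with a metric $du^2 + g_u$ chosen so that: (i) $g_0 = \gamma$ and the mean curvature of $\{u=0\}$ from the collar side equals $H$, so no jump in mean curvature arises at the gluing and the Shi--Tam argument is applicable across the seam; (ii) the outer face $\tilde\Sigma := \Sigma \times \{\delta\}$ has induced metric close to $\gamma(s)$ (hence strictly positive Gauss curvature) and strictly positive mean curvature; (iii) the collar metric has $R \geq 0$. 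A candidate profile is $g_u = \gamma + u\alpha + u^2\beta$ with $\tr_\gamma \alpha = -2H$, and $\beta$ chosen so that the mean curvature becomes strictly positive at $u = \delta$; the scalar-curvature condition is maintained by imposing a concavity-type constraint on the profile.

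\emph{Applying Theorem \ref{postiveBYmass} and passing to the limit.} The enlarged manifold $(\tilde\Omega, \tilde g) = (\Omega, g) \cup_\Sigma (C_\delta, du^2 + g_u)$ has nonneg scalar curvature; the component $\tilde\Sigma$ is strictly mean-convex with strictly positive Gauss curvature, while the other components of $\partial\tilde\Omega = \partial\Omega \setminus \Sigma$ are unchanged and retain $H \geq 0$. The Shi--Tam quasi-spherical construction at $\tilde\Sigma$, together with the positive mass theorem, gives $\mathfrak{m}_{BY}(\tilde\Sigma; \tilde\Omega, \tilde g) \geq 0$. Letting $s, \delta \to 0$, continuity of the isometric embedding under $C^2$-convergence of boundary metrics and direct control of the collar's contribution to the mass yield $\mathfrak{m}_{BY}(\tilde\Sigma; \tilde\Omega, \tilde g) \to \mathfrak{m}_{BY}(\Sigma; \Omega, g)$, producing positivity.

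\emph{Rigidity and main obstacle.} If $\mathfrak{m}_{BY}(\Sigma; \Omega, g) = 0$, the approximate masses tend to zero; the equality clause of Theorem \ref{postiveBYmass}, rather than being available for a single approximant, is extracted in the limit via a compactness argument on the quasi-spherical extensions, yielding that $(\Omega, g)$ is isometric to a domain in $\R^3$ (so in particular $\partial\Omega$ is connected). The technical heart, and the main obstacle, is the collar construction simultaneously satisfying (i)--(iii), particularly preserving $R \geq 0$ on the collar when $H$ and $K$ vanish on portions of $\Sigma$; the rigidity step is also delicate since none of the individual approximants attain the equality, so a blow-up/compactness argument for the quasi-spherical extensions is required.
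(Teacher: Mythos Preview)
Your proposal has a genuine gap at the collar step, which you yourself flag as ``the main obstacle'' but do not resolve. For a metric $du^2 + g_u$ on $\Sigma\times[0,\delta]$ one has
\[
R \;=\; 2K_{g_u} - |A_u|^2 - H_u^2 - 2\,\partial_u H_u,
\]
so at a boundary point where $H=0$ the condition $R\ge 0$ forces $\partial_u H_u\big|_{u=0}\le K_\gamma - \tfrac12|A_0|^2$. Nothing in the hypotheses prevents $\{H=0\}\cap\{K_\gamma=0\}\neq\emptyset$; at such a point you are forced to have $\partial_u H_u\le 0$, so the mean curvature cannot be pushed positive without creating negative scalar curvature in the collar. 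This is exactly the degeneration that blocks a direct quasi-spherical construction from $\Sigma$, and moving it into a collar does not escape it. The paper avoids this obstruction by entirely different means: when $\partial\Omega\setminus\Sigma\neq\emptyset$ it uses conformal deformations by harmonic functions plus a doubling/gluing argument and the results of \cite{M-M-T,ST2004} (Lemma~\ref{l-positivity-r1}); when $\partial\Omega=\Sigma$ it makes $H>0$ conformally at the price of allowing $K>-\kappa^2$ for small $\kappa$, embeds into $\mathbb{H}_{-\kappa^2}$, applies \cite{ShiTam2007}, and lets $\kappa\to 0$ (Lemma~\ref{l-positivity-r2}).

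Two further gaps remain. First, Theorem~\ref{postiveBYmass} requires \emph{every} boundary component to have positive mean curvature and positive Gauss curvature; your enlarged manifold leaves the components of $\partial\Omega\setminus\Sigma$ untouched with only $H\ge 0$ and no curvature assumption, so the theorem does not apply. Second, your rigidity sketch (``compactness on the quasi-spherical extensions'') is not a proof: no approximant attains equality, and you give no mechanism to extract flatness in the limit. The paper's rigidity argument is much more elaborate: strict positivity in the disconnected case and a Meeks--Simon--Yau minimal-surface argument (Lemma~\ref{l-positivity-r3}) reduce to $\Omega$ a topological ball with $g$ scalar-flat; then a weak inverse mean curvature flow from an interior point (Lemma~\ref{imcf1}) together with Hawking-mass monotonicity and a tailored approximation (Lemma~\ref{l-approx-1}) force the Ricci curvature to vanish.
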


We first remark that in case $\p\Omega$ has quasi positive Gaussian curvature and has {\it positive} mean curvature or $\p\Omega$ has {\it positive} Gaussian curvature and has nonnegative mean curvature, then  the nonnegativity  part  of Theorem \ref{main-quasipositive}  was proved in \cite{ST2004} and \cite{ST3} respectively. However, the rigidity part in the first instance was studied in \cite{ST2004} but not solved very satisfactorily.  The rigidity part in the second instance was not addressed in \cite{ST3}.

To show Theorem \ref{postiveBYmass} we used the method of quasi-spherical metric introduced by Bartnik \cite{Bartnik}. However, if the mean curvature is only assumed to be nonnegative,   a  parabolic equation involved in the quasi-spherical metric may be degenerated. To overcome this difficult, in case $\p\Omega$ is disconnected, we adopt a careful conformal  perturbation on the ambient metric $g$   so that one can use   Theorem \ref{postiveBYmass} and its generalization to the case that the  boundary has positive mean curvature and quasi-positive Gaussian curvature \cite{ST2004}.   In case $\p\Omega=\Sigma$, we use an approximation so that the mean curvature is positive but the scalar curvature may be bounded by a small negative constant. We then embed  the boundary to an hyperbolic space with   negative constant curvature which is small, and use a result in \cite{ShiTam2007} to get nonnegativity of Brown-York mass.

We prove the rigidity part of Theorem \ref{main-quasipositive}, first we show that if the Brown-York mass is zero, then   $\Omega$ is homeomorphic to the unit ball in $\R^3$ and $g$ is scalar flat. Then we need to show that $g$ is Ricci flat.  By suitable approximations,  as in \cite{HI} , one can construct a weak solution of the inverse mean curvature flow (IMCF) in $(\Omega, g)$ with a point $p \in \Omega$ as the initial data (see Lemma \ref{imcf1} below). We then approximate $g$ by metrics so that $\Sigma$ has positive Gaussian curvature and positive mean curvature, and so that it also has zero scalar curvature {\it outside} certain level sets of the IMCF. We can show that the level sets near $p$ have zero Hawking mass. Using the method as in the work of Husiken-Ilmanen \cite{HI}, one then conclude that $g$ is Ricci flat near $p$.

It is still an open question whether the Brown-York mass is nonnegative if the mean curvature is {\it negative} somewhere.

The remaining part of the paper goes as follows:  in the section 2, we prove
the positivity result Theorem \ref{main-quasipositive}; in the section 3, we prove the rigidity result of the theorem.

 {\it Acknowledgment}:  The authors would like to thank Man-Chuen Cheng for many useful discussions.

\section{Positivity}\label{s-positivity}

Let us first clarify the definition of Brown-York mass. Let   $(\Omega, g)$ be compact three manifold with smooth boundary $\p\Omega$. Let $\Sigma$ be a connected component  of $\p\Omega$ with induced metric $\gamma$. Suppose the Gaussian curvature of $(\Sigma,\gamma)$ is quasi positive. Then it can be $C^{1,1}$ isometrically embedded in $\R^3$ as a convex surface with mean curvature $H_0$ which is defined almost everywhere in $\Sigma$. Moreover,
$$
\int_{\Sigma}H_0d\sigma
$$
is well-defined   and is positive, see \cite{GL,HZ,ST2004}. It is well-defined in the sense that it is the same for any $C^{1,1}$ isometric embedding.  Here and below mean curvature is computed with respect to the unit outward normal and the mean curvature of the boundary of the unit ball in $\R^3$ is 2. Hence one can define  the Brown-York mass \cite{BrownYork-1,BrownYork-2} of $\Sigma$ in $(\Omega,g)$ by
\be\label{e-BY}
\mathfrak{m}_{BY}(\Sigma;\Omega,g)=\frac1{8\pi}\int_{\Sigma}(H_0-H) d\sigma.
\ee
Here $H$ is the mean curvature of $\Sigma$ in $(\Omega,g)$. In this section, we want to prove on the  positivity of Brown-York mass in Theorem \ref{main-quasipositive}.

\begin{rem}\label{r-conformal} We always use the following fact. Suppose the scalar curvature $R$ of $(\Omega,g)$ is nonnegative. Let $u$ be the solution of
\bee
\left\{
  \begin{array}{ll}
    8\Delta_g u-Ru=0  \ \ \text{in $\Omega$}  \\
    u=1\ \ \text{on $\p\Omega$.}
  \end{array}
\right.
\eee
Then $u$ is positive, so that $u^4g$ has zero scalar curvature and the mean curvature of $\p\Omega$ with respect to $u^4g$ is no less than its mean curvature with respect to $g$.
\end{rem}

\begin{lem}\label{l-positivity-r1}
Let $(\Omega, g)$ be a compact three manifold with smooth boundary $\p\Omega$ and with nonnegative scalar curvature. Let $\Sigma$ be a component of $\p\Omega$ as in Theorem \ref{main-quasipositive}. Suppose $\p\Omega\setminus \Sigma\neq\emptyset$, then
\bee
\mathfrak{m}_{BY}(\Sigma;\Omega,g)> 0.
\eee

\end{lem}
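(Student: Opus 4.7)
The goal is to reduce to the generalization of Theorem~\ref{postiveBYmass} in \cite{ST2004}---positivity of the Brown--York mass under quasi-positive Gaussian curvature of $\Sigma$ together with \emph{strictly} positive mean curvature of $\p\Omega$---by replacing $g$ with a conformal metric $\tilde g = u^4 g$ in which $\Sigma$ and the rest of $\p\Omega$ all have strict mean convexity while the data on $\Sigma$ are untouched. Taking $u \equiv 1$ on $\Sigma$ preserves the induced metric $\gamma$, its Gaussian curvature, and the embedding mean curvature $H_0$ in $\R^3$; the PDE $8\Delta_g u - Ru = 0$ gives $\tilde R \equiv 0$; and on $\Sigma$ the new mean curvature is $\tilde H = H + 4\p_\nu u$, so the goal is to arrange $\p_\nu u > 0$ pointwise on $\Sigma$.

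\medskip

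\textbf{Construction of $u$.} I would take $u$ to be the unique positive smooth solution of the mixed Dirichlet problem
\begin{equation*}
8\Delta_g u - Ru = 0 \text{ in } \Omega, \qquad u \equiv 1 \text{ on } \Sigma, \qquad u \equiv 1-\eta \text{ on } \p\Omega\setminus\Sigma,
\end{equation*}
for a small parameter $\eta > 0$. This is exactly where the hypothesis $\p\Omega\setminus\Sigma \ne \emptyset$ enters, since it supplies a second boundary piece on which to prescribe a strictly smaller Dirichlet value. Standard energy estimates (testing the PDE against $(u-1)^+$ and $u^-$, using $R\ge 0$) give $0 < u \le 1$ on $\overline{\Omega}$; the strong maximum principle then yields $u < 1$ in the interior of $\Omega$, and Hopf's boundary-point lemma gives $\p_\nu u > 0$ pointwise on all of $\Sigma$. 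Consequently $\tilde H = H + 4\p_\nu u > 0$ strictly on $\Sigma$. On $\p\Omega\setminus\Sigma$ linearity in the boundary data yields $|\p_\nu u| = O(\eta)$ uniformly, so for $\eta$ sufficiently small $\tilde H = u^{-2}(H + 4u^{-1}\p_\nu u)$ remains strictly positive on $\p\Omega\setminus\Sigma$ as well, possibly after a further local conformal correction supported in a collar of $\p\Omega\setminus\Sigma$ that does not meet $\Sigma$.

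\medskip

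\textbf{Conclusion and main obstacle.} Once $(\Omega, \tilde g)$ satisfies the hypotheses of the generalization of Theorem~\ref{postiveBYmass} in \cite{ST2004}, that theorem gives $\mathfrak{m}_{BY}(\Sigma;\Omega,\tilde g) \ge 0$. Since $u \equiv 1$ on $\Sigma$, the metric $\gamma$, the embedding mean curvature $H_0$, and the area element on $\Sigma$ coincide for $g$ and $\tilde g$, hence
\begin{equation*}
\mathfrak{m}_{BY}(\Sigma;\Omega, g) - \mathfrak{m}_{BY}(\Sigma;\Omega, \tilde g) = \frac{1}{8\pi}\int_\Sigma (\tilde H - H)\, d\sigma = \frac{1}{2\pi}\int_\Sigma \p_\nu u\, d\sigma > 0,
\end{equation*}
yielding $\mathfrak{m}_{BY}(\Sigma;\Omega,g) > 0$. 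The hard part is the preservation of strict mean convexity on $\p\Omega\setminus\Sigma$ after the conformal change: the first-order effect of the perturbation decreases the mean curvature there by an $O(\eta)$ amount, which is harmless if $H$ is bounded below away from zero on that component, but requires a careful $\Sigma$-preserving local conformal adjustment at any points of $\p\Omega\setminus\Sigma$ where $H$ vanishes. This is precisely the ``careful conformal perturbation'' alluded to in the introduction.
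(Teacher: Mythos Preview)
Your approach has a genuine gap that is more serious than the ``hard part'' you flag at the end. The generalization of Theorem~\ref{postiveBYmass} in \cite{ST2004} that you want to invoke requires quasi-positive Gaussian curvature on \emph{every} component of $\p\Omega$, not only on $\Sigma$. (This is what the introduction means by ``in case $\p\Omega$ has quasi positive Gaussian curvature and has positive mean curvature\dots''.) The hypotheses of the lemma say nothing about the Gaussian curvature of $\Sigma'=\p\Omega\setminus\Sigma$; it may be negative everywhere. Your conformal factor $u$, with $u\equiv 1-\eta$ on $\Sigma'$, rescales the induced metric on $\Sigma'$ by a constant and hence does not change the sign of its Gaussian curvature. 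So even if you succeed in making $\tilde H>0$ on all of $\p\Omega$, you still cannot apply \cite{ST2004} directly.

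Regarding the obstacle you do identify: after reducing to $R\equiv 0$ your $u$ is harmonic with $1-\eta\le u\le 1$, so by Hopf $\p_\nu u<0$ pointwise on $\Sigma'$. Thus at any point of $\Sigma'$ where $H=0$ (which the hypotheses allow) you get $\tilde H<0$ for \emph{every} $\eta>0$, not merely $\tilde H$ small. A ``local conformal correction in a collar of $\Sigma'$'' that simultaneously keeps the scalar curvature zero, leaves $\Sigma$ untouched, and pushes $\tilde H$ positive on $\Sigma'$ is not a routine matter, and you have not supplied one.

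The paper's proof takes a completely different route precisely to sidestep both problems: it removes $\Sigma'$ from the boundary altogether. Two copies of $\Omega$, equipped with two different conformal metrics $g_1,g_2$ chosen so that the induced metrics match on $\Sigma'$ and the mean curvatures jump favorably, are glued along $\Sigma'$; the result is then doubled along one of the $\Sigma$-copies. The final manifold $M_4$ has boundary consisting solely of copies of $\Sigma$ (hence quasi-positive Gaussian curvature everywhere on $\p M_4$), with positive mean curvature, and with $\Sigma'$ buried as an interior corner. A smoothing result of \cite{M-M-T} turns this into a smooth nonnegative-scalar-curvature fill-in to which \cite{ST2004} applies, and the strict inequality comes from the strict drop $H_{g_3}>H_g$ on the copy of $\Sigma$ coming from $g_2$.
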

\begin{proof} In the following, the area element of $\p\Omega$ with respect to the metric induced by $g$ will be denoted by $d\sigma_g$, and the mean curvature will be denoted by $H_g$, etc. Let $\gamma=g|_{T(\Sigma)}$ and let $H_0$ be the mean curvature when $(\Sigma,\gamma)$ is $C^{1,1}$ isometrically embedded in $\R^3$

By Remark \ref{r-conformal}, we may assume that the scalar curvature of $(\Omega,g)$ is zero. Moreover, since $\int_\Sigma H_0d\sigma_g>0$, we may assume that $H(x_0)>0$ somewhere. Let $\Sigma'=\p\Omega\setminus \Sigma\neq\emptyset$.

 First, we want to find a smooth metric $g_1$ on $\ol\Omega$ such that
\begin{enumerate}
\item[(i)] $g_1$ has zero scalar curvature;
  \item [(ii)] the mean curvature $H_{g_1}$ of $\p\Omega$ is positive; and

  \item [(iii)] $g$ and $g_1$ induce the same metric on $\Sigma'$.
\end{enumerate}
To construct $g_1$, let $U$ be a  neighborhood of $x_0$ in $\Sigma$ such that $H_g\ge c_0>0$ in $U$. Let $0\le \phi\le 1$ be a smooth cutoff function with support in $U$ so that $\phi=1$ in a neighborhood of $x_0$.   Given $\e>0$ and let $u $ be the solution of
\bee
\left\{
\begin{array}{rcl}
 \Delta_g u  &  =   &  0  \ \ \mathrm{in} \  \Omega \\
u  &  =  & 1-\e\phi \  \ \mathrm{on} \ \p\Omega.
\end{array}
\right.
\eee
For $\e>0$ small enough, $u>0$ and  has zero scalar curvature. Moreover,
$$
H_{g_1}=\frac1{u^2}\lf(H_g+\frac 4u\frac{\p u}{\p\nu}\ri)
$$
where $\nu$ is the unit outward normal. By the strong   maximum principle $H_{g_1}>0$ outside $U$. Insider $U$, $H_g>0$ and so $H_{g_1}>0$ provided $\e$ is small enough.
 Fix such an $\e_1>0$. Note that the Gaussian curvature of $\Sigma$ may be negative somewhere.  Hence $g_1=u^4g$ satisfies the conditions mentioned above. In particular, the mean curvature at $\Sigma'$ with respect to $g_1$ is bounded below by some positive constant $a>0$.

Next, for any $\e>0$ let $v$ be the harmonic function in $\Omega$ so that $v=1$ on $\Sigma$ and $v=1-\e$ on $\Sigma'$. Then for $\e$ small enough, $v^4g$ is a smooth metric on $\ol\Omega$ such that the mean curvature of $\Sigma$ with respect to $v^4g$ is larger than the mean curvature with respect to $g$. Moreover, the mean curvature of $\Sigma'$ with respect to $v^4g$ is bounded in absolute value by $\frac a2$, provided $\e$ is small enough. Choose such an $\e_2>0$. Let $g_2=v^4g$. Then $g_2, g$ induce the same metric on $\Sigma$ and $(1-\e_2)^4g_1$ and $g_2$ induce the same metric on $\Sigma'$.

Let $M_1= \Omega $ with metric $(1-\e_2)^4g_1$ and $M_2=\Omega$ with metric $g_2$. We can glue the $M_1$ and $M_2$ along $\Sigma'$. Denote the resulting manifold by $M_3$ and  the resulting metric  by $g_3$. Then the boundary of $M_3$ consists of two copies of $\Sigma$ denoted by $\Sigma_1$ and $\Sigma_2$. Moreover the following are true:

\begin{enumerate}
  \item [(i)] $g_3$ is smooth except along $\Sigma'$. Moreover, $g_3$ is Lipschitz and is smooth on each side of $\Sigma'$.
  \item [(ii)] The scalar curvature of $g_3$ is zero away from $\Sigma'$.
  \item [(iii)] The mean curvature of $\Sigma_1$ and $\Sigma_2$ are positive.
  \item[(iv)] The mean curvature jump at $\Sigma'$ is positive. Namely, if we choose the unit normal pointing outside $\Sigma'$ in $M_1$, then the mean curvature jump is at least $a-\frac a2=\frac a2>0$.
      \item[(v)] $g$ and $g_3$ induce the same metric on $\Sigma$ which corresponds to $\Sigma_2$.
          \item[(vi)] The mean curvature of $\Sigma_2$ with respect to $g_3$ is larger than the mean curvature of $\Sigma$ with respect to $g$.
\end{enumerate}

We claim that
\be\label{e-positive-r-1}
\int_{\Sigma_2}(H_0-H_{g_3})d\sigma_{g_3}\ge0.
\ee
  If the claim is true, then by (v) and (vi) above, we conclude the lemma is true.

To prove the claim we further glue $M_3$ along $\Sigma_1$. Denote the resulting manifold by $M_4$ and  the resulting metric  by $g_4$.  The boundary of $M_4$ consists of two copies of $\Sigma_2$, denoted by $\wt\Sigma_1, \wt\Sigma_2$. The following are true:
\begin{enumerate}
  \item [(i)] $g_4$ is smooth except along those parts coming  from $\Sigma'$ or from $\Sigma_1$. Moreover, $g_4$ is Lipschitz and is smooth on each side of these surfaces.
  \item [(ii)] The scalar curvature of $g_4$ is zero away from those parts coming  from $\Sigma'$ or from $\Sigma_1$.
  \item [(iii)] The mean curvature of $\wt\Sigma_1$ and $\wt\Sigma_2$ with respect to $g_4$ are positive. In fact they are equal the mean curvature of $\Sigma_2$ with respect to $g_3$.
  \item[(iv)] The mean curvature jump at those parts coming  $\Sigma'$ or $\Sigma_1$ are positive, because the  mean curvature of $\Sigma_1$ with respect to $g_3$ is positive.
      \item[(v)]  $\wt\Sigma_1, \wt \Sigma_2$ with respect to the induced metric from $g_4$ is isometric to $(\Sigma,g|_{T(\Sigma)})$.
\end{enumerate}

By \cite[Theorem 3.3]{M-M-T}, there exists a smooth metric $h$ on $M_4$ with nonnegative scalar curvature so that $h$,   and $g_4$ induce the same metric on $\p M_4$ and
$$
\int_{\p M_4}H_h d\sigma_h>\int_{\p M_4} H_{g_4}d\sigma_{g_4}=2\int_\Sigma H_{g_3}d\sigma_{g_3}.
$$
  Moreover, $H_h>0$ on $\p M_4$.  Since each component of $\p M_4$ with metric induced by $h$ is isometric to $ \Sigma$ with metric induced by $g$, it has quasi positive Gaussian curvature. By \cite[Theorem 0.2]{ST2004}, we conclude that
$$
2\int_{\Sigma} H_0 d\sigma\ge  \int_{\p M_4}H_h d\sigma\ge 2\int_\Sigma H_{g_3}d\sigma_{g_3}.
$$
Hence the claim is true. This completes the proof of the lemma.

\end{proof}

\begin{lem}\label{l-positivity-r2} Let $(\Omega,g)$ and $\Sigma$ be as in Theorem \ref{main-quasipositive}. Suppose $\p\Omega=\Sigma$, then
\bee
\mathfrak{m}_{BY}(\Sigma;\Omega,g)\ge 0.
\eee
\end{lem}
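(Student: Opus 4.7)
My plan, foreshadowed in the introduction, is to perturb $g$ into a family $\{g_\e\}_{\e>0}$ whose boundary mean curvature is strictly positive, at the cost of introducing a uniformly small negative lower bound on the scalar curvature; then apply the hyperbolic analogue of Brown--York positivity from \cite{ShiTam2007} with reference ambient space $\H^3(-\kappa_\e^2)$ for a carefully chosen small $\kappa_\e$, and let $\e\to 0$ to recover the Euclidean statement.

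First, using Remark \ref{r-conformal}, I would reduce to the case $R_g=0$: the conformal change there preserves $\gamma$ on $\Sigma$ and only increases $H$, so it suffices to prove the lemma under this additional assumption. Next, for each small $\e>0$, I would take $v_\e$ to be the solution of
\be
\Delta_g v_\e=\e\ \text{in }\Omega,\qquad v_\e=1\ \text{on }\Sigma,
\ee
and set $g_\e=v_\e^4 g$. Standard elliptic estimates give $\|v_\e-1\|_{C^2(\ol\Omega)}=O(\e)$; since $v_\e$ is subharmonic and non-constant, $v_\e<1$ in $\Omega$, so the Hopf boundary-point lemma yields $\p_\nu v_\e>0$ on $\Sigma$. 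The conformal transformation formulas then give
\bee
R_{g_\e}=-8\e\, v_\e^{-5}\ge -C\e,\qquad H_{g_\e}=H_g+4\p_\nu v_\e>0\ \text{on }\Sigma,
\eee
while $g_\e$ induces the same metric $\gamma$ on $\Sigma$ as $g$.

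Now I would pick $\kappa_\e>0$ with $6\kappa_\e^2=C\e$. Because $K_\gamma\ge 0>-\kappa_\e^2$ and is positive somewhere, Pogorelov's extension of Weyl's theorem provides a convex $C^{1,1}$ isometric embedding of $(\Sigma,\gamma)$ into $\H^3(-\kappa_\e^2)$ with an $L^\infty$ mean curvature $H_{\kappa_\e}$ whose integral is well defined. The triple $(\Omega,g_\e,\Sigma)$ then satisfies the hypotheses of the hyperbolic positivity theorem in \cite{ShiTam2007}: the scalar curvature is bounded below by that of $\H^3(-\kappa_\e^2)$, the boundary is strictly mean-convex, and it embeds as a convex surface in the hyperbolic reference. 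That theorem yields a nonnegativity inequality of the form
\bee
\int_\Sigma \bigl(H_{\kappa_\e}-H_{g_\e}\bigr)\,W_{\kappa_\e}\,d\sigma\ge 0,
\eee
where $W_{\kappa_\e}$ is a positive weight (essentially a static potential of $\H^3(-\kappa_\e^2)$) that converges uniformly to $1$ as $\kappa_\e\to 0$.

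Finally, I would send $\e\to 0$. On one side $H_{g_\e}\to H_g$ uniformly on $\Sigma$ with $H_g\ge H$; on the other side, by the continuous dependence of $C^{1,1}$ convex isometric embeddings on the ambient sectional curvature (in the Pogorelov/Guan--Li circle of ideas) one expects $H_{\kappa_\e}\,d\sigma\to H_0\,d\sigma$ in an integrable sense together with $W_{\kappa_\e}\to 1$ uniformly. Passing to the limit yields $\int_\Sigma(H_0-H)\,d\sigma\ge 0$, i.e.\ $\mathfrak m_{BY}(\Sigma;\Omega,g)\ge 0$. The main obstacle I foresee is this last step: justifying the weak/$L^1$ convergence of $H_{\kappa_\e}\,d\sigma$ to $H_0\,d\sigma$ as the ambient hyperbolic curvature degenerates to $0$, given only the $C^{1,1}$ regularity of the embeddings and only the quasi-positivity (not strict positivity) of $K_\gamma$, and then matching this limit cleanly with the precise form of the inequality supplied by \cite{ShiTam2007}.
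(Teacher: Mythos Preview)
Your approach is essentially the paper's: perturb conformally so that $\Sigma$ becomes strictly mean-convex, embed isometrically into $\H^3(-\kappa^2)$ for small $\kappa$, invoke \cite{ShiTam2007}, and pass to the limit $\kappa\to 0$. The only difference is the choice of perturbation. You solve $\Delta_g v_\e=\e$ with $v_\e=1$ on $\Sigma$, which keeps the boundary metric $\gamma$ fixed at the cost of introducing a small negative scalar curvature that you absorb into $\kappa_\e$; the paper instead takes the \emph{harmonic} conformal factor with boundary value $1-\e\phi$ (where $\phi$ is a cutoff supported near a point with $H>0$), which keeps the scalar curvature identically zero but perturbs $\gamma$ to a nearby $\gamma(\e)$ whose Gaussian curvature may dip slightly below zero. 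Either way one is forced to embed into a weakly curved hyperbolic space, and both routes face exactly the limiting obstacle you flag at the end. The paper handles it via the uniform $C^2$ estimates of \cite{LinWang2015} on the embedding maps, yielding uniform bounds on $H_{\kappa_\e}$ and $C^{1,\alpha}$ subconvergence of the embeddings to a $C^{1,1}$ convex surface in $\R^3$, after which the convergence of $\int_\Sigma H_{\kappa_\e}\,d\sigma$ follows as in \cite{ST2004}. Your variant has the mild bonus that $\gamma$ stays fixed throughout, so only the ambient curvature varies in the limit; note incidentally that since $K_\gamma+\kappa_\e^2>0$ strictly, your hyperbolic embedding is actually smooth and strictly convex, not merely $C^{1,1}$, which is what \cite{ShiTam2007} requires.
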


\begin{proof} By Remark \ref{r-conformal}, we may assume that $g$ is scalar flat. Note that $\p\Omega=\Sigma$  is a sphere because its Gaussian curvature is quasi positive. Moreover, we may assume   the mean curvature $H$ of $\Sigma$ is quasi positive.  Let $x_0\in \Sigma$ with $H(x_0)>0$. Let $U$ be an neighborhood of $x_0$ in $\Sigma$ such that $H_g\ge c_0>0$ in $U$. Let $0\le \phi\le 1$ be a smooth cutoff function with support in $U$ so that $\phi=1$ in a neighborhood of $x_0$.   Given $\e>0$ and let $u=u(\e) $ be the solution of
\bee
\left\{
\begin{array}{rcl}
 \Delta_g u  &  =   &  0  \ \ \mathrm{in} \  \Omega \\
u  &  =  & 1-\e\phi \  \ \mathrm{on} \ \p\Omega.
\end{array}
\right.
\eee
For $\e>0$ small enough, $  g(\e)=u^4g$ has zero scalar curvature so that $\p\Omega$ has positive mean curvature. Let $ \gamma(\e)$ be the metric on $\Sigma$ induced by $ g(\e)$ and let $  K(\e)$ be the Gaussian curvature of $\Sigma$ with respect to $  \gamma(\e)$. Then

\be\label{e-Gauss-1}
  K(\e)> -\kappa^2(\e)
\ee
where $\kappa(\e)>0$, $\kappa(\e)\to0$ as $\e\to 0$.     We can isometrically embed $(\Sigma, \gamma(\e))$ in $\H_{-\kappa^2(\e) }$ as a strictly convex surface in the ball model defined in  the ball
$$
\{|x|<\kappa^{-2}(\e)\}
$$
by \cite{Pogorelov}.
Moreover, we may assume the origin is inside the embedded surface. Let $  H(\e)$ be the mean curvature of $\Sigma$ with respect to $  g(\e)$ and let $H_{\kappa(\e)}$ be the mean curvature when $(\Sigma, \gamma(\e))$ is isometrically  embedded in the hyperbolic space $\H_{-\kappa^2(\e)}$ with constant curvature $-\kappa(\e)$. By \cite{ShiTam2007}, we have
\be\label{e-AH}
\int_\Sigma(H_{\kappa(\e)}-H(\e))\cosh (\kappa(\e) r) d \sigma_{g(\e)} \ge0
\ee
where $r$ is the distance from the origin in $\H_\kappa(\e)$.

Observe that we can find $\e_i\to0$ such that $g(\e_i)\to g$ in $C^\infty$ norm on $\ol\Omega$. Hence the
  intrinsic diameter   of $(\Sigma,\gamma(\e_i))$ is bounded by a constant independent of $i$, we conclude that $r$ is bounded by a constant independent of $i$.
  By \cite[p.7152-7154]{LinWang2015}, one can choose $\e_i\to0$ such that:

\begin{itemize}
  \item $H_{\kappa(\e_i)}$ are uniformly bounded from above. (Note that $H_{\kappa(\e_i)}>0$).
  \item If $\bX_i=(x^1,x^2,x^3)$ is the isometric embedding of $(\Sigma,\gamma(\e_i))$, then the $C^2$ norm with respect to the fixed metric $\sigma$  are uniformly bounded.
\end{itemize}
Together with \eqref{e-AH}, we conclude that
$$
\liminf_{i\to\infty}\int_\Sigma(H_{\kappa(\e_i)}-H_g )  d\sigma\ge0.
$$
Moreover, $\bX_i$ converge to a $C^{1,1}$ embedding of $(\Sigma, \sigma)$ in $\R^3$ as a convex surface. As in \cite{ST2004}, one can  conclude that
$$
\lim_{i\to\infty}\int_\Sigma H_{\kappa(\e_i)}   d\sigma=\int_\Sigma H_0d\sigma.
$$
 where $H_0$ is the mean curvature of $ \Sigma $ when $(\Sigma,\gamma)$   is isometrically $C^{1,1}$ embedded in $\R^3$. Here $\gamma=g|_{T(\Sigma)}$.  From this the lemma follows.

 \end{proof}

 \begin{proof}[Proof of   Theorem \ref{main-quasipositive} (i)  \underline{Positivity}] Let $(\Omega,g)$, $\Sigma$ be as in Theorem  \ref{main-quasipositive}. Then by Lemmas \ref{l-positivity-r1} and \ref{l-positivity-r2}, we have
 \bee
\mathfrak{m}_{BY}(\Sigma;\Omega,g)\ge 0.
\eee

 \end{proof}

\section{Rigidity}\label{s-rigidity}

In the section, we will prove the rigidity part in Theorem \ref{main-quasipositive}. First we have the following:
\begin{lem}\label{l-positivity-r3}
Let $(\Omega,g), \Sigma$ be as in Theorem \ref{main-quasipositive} so that $\p\Omega=\Sigma$. Suppose $\Omega$ is not homeomorphic to the unit ball in $\R^3$, then
\bee
\mathfrak{m}_{BY}(\Sigma;\Omega,g)> 0.
\eee
\end{lem}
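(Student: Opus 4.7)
The plan is to argue by the contrapositive: assuming $\Omega$ is not homeomorphic to the unit $3$-ball, I will produce an embedded minimal $2$-sphere in the interior of $\Omega$, cut $\Omega$ along it to create a domain with disconnected mean-convex boundary, and then invoke Lemma \ref{l-positivity-r1} to conclude strict positivity of the Brown--York mass.

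First I would dispose of the degenerate subcase in which $H \equiv 0$ on $\Sigma$. There the Brown--York mass equals $\frac{1}{8\pi}\int_\Sigma H_0\, d\sigma$, and this is strictly positive because $H_0 > 0$ almost everywhere on any convex $C^{1,1}$ isometric embedding of $(\Sigma,\gamma)$ in $\R^3$. So I may henceforth assume that the mean curvature of $\Sigma$ is quasi-positive rather than identically zero.

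Next comes a topological observation: since $\Sigma$ has quasi-positive Gaussian curvature it is a topological $2$-sphere, and a compact orientable $3$-manifold $\Omega$ with $\p\Omega \cong S^2$ that is not the $3$-ball must satisfy $\pi_2(\Omega) \neq 0$. (Capping $\Sigma$ off by a ball produces a closed orientable $3$-manifold which, by the resolution of the Poincar\'e conjecture, cannot be $S^3$; this is incompatible with $\Omega$ being irreducible, and the sphere theorem of Papakyriakopoulos then gives an embedded essential $2$-sphere in $\Omega$.) I would then apply the Meeks--Simon--Yau existence theorem for embedded stable minimal $2$-spheres in compact $3$-manifolds with mean-convex boundary to obtain such a minimal sphere $S\subset\Omega$. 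Because $H$ is not identically zero on $\Sigma$, the boundary maximum principle for minimal surfaces against a mean-convex barrier rules out $S = \Sigma$, and hence $S$ lies in the interior of $\Omega$.

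The conclusion is then a short reduction to Lemma \ref{l-positivity-r1}. Cut $\Omega$ along $S$: if $S$ is separating, take $\tilde\Omega$ to be the component of $\Omega\setminus S$ whose closure contains $\Sigma$, with $\p\tilde\Omega = \Sigma\sqcup S$; if $S$ is non-separating, let $\tilde\Omega$ be the cut manifold with $\p\tilde\Omega = \Sigma\sqcup S_1\sqcup S_2$ (two copies of $S$). In either case $\tilde\Omega$ has disconnected boundary, the scalar curvature condition is inherited from $(\Omega,g)$, and the mean curvature is nonnegative on every boundary component ($H\ge 0$ on $\Sigma$, and the new components are minimal). The induced metric and outward mean curvature of $\Sigma$ in $\tilde\Omega$ coincide with those in $\Omega$, so the Brown--York mass is unchanged, and Lemma \ref{l-positivity-r1} yields
\[
\mathfrak{m}_{BY}(\Sigma;\Omega,g) \;=\; \mathfrak{m}_{BY}(\Sigma;\tilde\Omega,\,g|_{\tilde\Omega}) \;>\; 0,
\]
which is the desired conclusion. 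The main obstacle I foresee is Step 3: the classical Meeks--Simon--Yau set-up favors a strictly mean-convex barrier, whereas here $\p\Omega$ may have $H$ vanishing on a nontrivial set. The natural remedy is to first conformally perturb $g$ as in the proof of Lemma \ref{l-positivity-r2} to render $H>0$ strictly on $\p\Omega$, apply MSY in the perturbed metric to get a minimal sphere $S_\e$, and then take a limit $\e\to 0$ using uniform area and diameter bounds to guarantee that the limit sphere does not degenerate onto $\Sigma$.
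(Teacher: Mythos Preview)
Your overall architecture---produce an embedded minimal surface in the interior, cut along it, and invoke Lemma~\ref{l-positivity-r1}---is exactly the paper's strategy, but the topological input you use is genuinely different. The paper appeals to the Meeks--Simon--Yau result in the form ``if $\Omega$ is not a handlebody, then $\Omega$ contains an embedded minimal $S^2$ or an embedded minimal $\mathbb{RP}^2$'' and therefore has to treat a second case: when the minimal surface is a one-sided $\mathbb{RP}^2$, they build an explicit double cover $\hat\Omega\to\Omega$ in which the preimage of the $\mathbb{RP}^2$ becomes a two-sided sphere, so that $\partial\hat\Omega$ consists of two copies of $\Sigma$ and Lemma~\ref{l-positivity-r1} applies. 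Your route instead spends the Poincar\'e conjecture up front to force $\pi_2(\Omega)\neq 0$, so that the Meeks--Simon--Yau minimization yields an embedded minimal \emph{sphere} directly and the $\mathbb{RP}^2$ case never arises. The trade-off is clear: the paper's argument is Poincar\'e-free but requires the double-cover construction; yours is shorter but leans on a very deep theorem. Both cut-and-apply-Lemma~\ref{l-positivity-r1} endings are the same, and your separating/non-separating dichotomy matches what the paper does implicitly when it passes to ``the component containing $\partial\Omega$.''

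Two minor remarks. First, your sentence ``this is incompatible with $\Omega$ being irreducible'' deserves one more line of justification: the clean statement is that a compact orientable $3$-manifold with $\partial\Omega\cong S^2$ which is irreducible must be $B^3$ (push $\partial\Omega$ inward; the resulting sphere bounds a ball), and then ``reducible $\Rightarrow \pi_2\neq 0$'' is precisely where the Poincar\'e conjecture enters. Second, your worry about strict mean-convexity for Meeks--Simon--Yau is unnecessary here: their barrier argument works with $H\ge 0$, and your own observation that $S=\Sigma$ is excluded once $H\not\equiv 0$ (via the strong maximum principle) already keeps the minimal sphere in the interior. The proposed perturbation fix is sound but not needed.
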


\begin{proof} Since the Gaussian curvature of $\Sigma$ is quasi positive, $\Sigma$ is a topological sphere. If $\Omega$ is a handle body, then it is homeomorphic to the unit ball. Suppose this is not the case, then $\Omega$ is not a handle body. By \cite[Theorem 1' and Proposition 1]{MeeksSimonYau} there is an embedded minimal surface $S$ which is either a sphere or a minimal projective space inside $\Omega$.

{\bf Case 1}: Suppose $S$ is a sphere. Since $S$ is orientable, there is a smooth unit normal vector field on $S$ and there is an embedding $F:S\times(-1,1)\to \Omega$ so that $F(\cdot\, , 0)=S$ and the image of $F$ is a tabular neighborhood $N$ of $S$ in $\Omega$. Then
$N\setminus S$ is a manifold with boundary which are two copies of $S$ with two components. Hence $\Omega\setminus S$ is a manifold  with boundary which is a copy of $S$. Let $\wt\Omega$ be the connected component containing $\p\Omega=\Sigma$ of this manifold. Then $(\wt\Omega,g)$ has nonnegative scalar curvature so that $\p\wt\Omega$ is disconnected, and
$ \mathfrak{m}_{BY}(\Sigma,\Omega,g)=\mathfrak{m}_{BY}(\Sigma,\wt\Omega,g)$, which is   positive by Lemma \ref{l-positivity-r1}.

{\bf Case 2}: Suppose $S$ is a projective space. $f:\mathbb{RP}^2\to \Omega$ is an embedding with $S=f(\mathbb{RP}^2)$. We want to construct a double cover $p:\hat \Omega\to \Omega$ so that $p^{-1}(f(\mathbb{RP}^2))\cong\mathbb{S}^2$.

Let $V$ be the normal bundle of the embedding $f$. Note that $\mathbb{RP}^2$ has only two non-isomorphic real line bundles, namely the tautological line bundle and the trivial one. Since $\Omega$ is orientable, $V$ is isomorphic the tautological line bundle $((\mathbb{S}^2\times\mathbb{R})/\sim)\to(\mathbb{S}^2/\sim)\cong\mathbb{RP}^2$ with $(x,k)\sim(-x,-k)$ on $\mathbb{S}^2\times\mathbb{R}$.

By the tubular neighborhood theorem, there exists an open embedding $G:((\mathbb{S}^2\times\mathbb{R})/\sim)\cong V\to \Omega$ whose restriction on the zero section is equal to $f$. Let $\Omega'=G((\mathbb{S}^2\times[-1,1])/\sim)$ and $\Omega''=\Omega\setminus G((\mathbb{S}^2\times(-1,1))/\sim)$.  Then $\Omega=\Omega'\cup \Omega''$ with $\Omega'\cap \Omega''=\partial \Omega'\cong\mathbb{S}^2$.

Let $\Omega_+$, $\Omega_{-}$ be two identical copies of $\Omega''$. Define $\phi:\mathbb{S}^2\times\{-1,1\}\to \Omega_+\sqcup \Omega_-$ by $\phi(x,1)=g([(x,1)])\in \Omega_+$ and $\phi(x,-1)=g([(x,-1)])\in \Omega_-$ for $x\in\mathbb{S}^2$. Let $\hat \Omega=\mathbb{S}^2\times[-1,1]\cup_{\phi}(\Omega_+\sqcup \Omega_-)$. Then the obvious map $p:\hat \Omega\to \Omega$ has the desired properties. By the construction, we see that $(\hat \Omega, \hat g)$ has nonnegative scalar curvature and $\p \hat \Omega$ two components, each of them has quasi-positive mean curvature  with respect to outward unit norm vector and  quasi-positive Gauss curvature. In fact, near each component, $(\hat\Omega,\hat g)$ is isometric to neighborhood of $\Sigma$ in $(\Omega,g) $.  On the other hand, $ 2\mathfrak{m}_{BY}(\Sigma,\Omega,g)=\mathfrak{m}_{BY}(\p\hat \Omega,\hat\Omega,g)$, which is positive by Lemma \ref{l-positivity-r1}. This completes the proof of the lemma.
\end{proof}

Let $(\Omega,g)$ and $\Sigma$ be as in Theorem \ref{main-quasipositive}. Suppose $\mathfrak{m}_{BY}(\Sigma;\Omega,g)=0$. Then by Lemmas \ref{l-positivity-r1} and \ref{l-positivity-r3}, we conclude that $\p\Omega=\Sigma$ and $\Omega$ is homeomorphic to the unit ball. By Remark \ref{r-conformal}, we conclude that $g$ is scalar flat.  Moreover, since $\Sigma$ has quasi positive Gaussian curvature, we conclude that $\Sigma$   has quasi positive mean curvature. In the rest of this section, we always assume the above facts. In remains to prove that $g$ is Ricci flat.

  We need the following two lemmas.

 \begin{lem}\label{l-approx-1}  Let $(\Omega, g)$ and $\Sigma$ be as above.   For any $p$ in $\Omega$ and for any $\rho>0$ small enough,  there is a sequence of smooth metrics $g_i$ on $\ol\Omega$ with the following properties:

\begin{enumerate}
  \item [(i)] $g_i\to g$ in $C^\infty$ norm in $\ol \Omega$.
  \item [(ii)] $\Sigma$ has positive mean curvature $H_i$ with respect to $g_i$.
  \item [(iii)] Let $\gamma_i$ be the induced metric of $g_i$ on $\Sigma$. Then the Gaussian curvature of $(\Sigma,\gamma_i)$ has positive Gaussian curvature.
      \item[(iv)] The scalar curvature of $g_i$ is zero outside $B(p,2\rho)$.
      \item[(v)] The mean curvature of $\p B_g(p,s)$ with respect to $g_i$ is positive for all $s<2\rho$ for all $i$.
          \item[(vi)] $
\mathfrak{m}_{BY}(\Sigma;\Omega,g_i)\to 0$ as $i\to\infty$.
\end{enumerate}

\end{lem}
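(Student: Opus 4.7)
I would build $g_i$ in two stages: first a ``geometric'' perturbation $\tilde g_i$ of $g$, supported in a tubular neighborhood $\mathcal{N}$ of $\Sigma$ disjoint from $B_g(p,3\rho)$, that lifts the quasi-positive boundary data to strictly positive; and then a conformal rescaling $g_i = u_i^4\tilde g_i$ whose effect is (a) to kill the scalar curvature created by Stage 1 everywhere outside $B_g(p,2\rho)$, and (b) to leave the induced metric on $\Sigma$ unchanged, so that strict positivity of the Gauss curvature is preserved. Before anything, shrink $\rho>0$ so that $B_g(p,4\rho)$ is disjoint from $\Sigma$ and so that every $g$-geodesic sphere $\p B_g(p,s)$ with $s<2\rho$ has strictly positive mean curvature with respect to $g$ (possible because small geodesic spheres are convex).

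\textbf{Stage 1 (perturbation near $\Sigma$).} Write $g = dt^2 + h_t$ in Fermi coordinates in $\mathcal{N}$, with $h_0 = \gamma$, the second fundamental form encoded in $\p_t h_t|_{t=0}$, and the mean curvature $H = -\tfrac12\tr_\gamma\p_t h_t|_{t=0}$. Using the quasi-positivity of the Gauss curvature $K$ on the topological sphere $\Sigma$, I choose a conformal factor $e^{2\e_i f}$ with $f$ suitably constructed so that $\tilde\gamma_i := e^{2\e_i f}\gamma$ has strictly positive Gauss curvature (one may take $f$ with $-\Delta_\gamma f$ strictly positive on the zero set of $K$); and using the quasi-positivity of $H$, I further adjust $\p_t h_t|_{t=0}$ by an $O(\e_i)$ symmetric $2$-tensor to raise the mean curvature to be strictly positive. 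Multiplying the modification by a cutoff in $t$ supported in $\mathcal{N}$ produces a smooth metric $\tilde g_i$ on $\ol\Omega$ that agrees with $g$ on $\ol{B_g(p,3\rho)}\cup(\Omega\setminus\mathcal{N})$, converges to $g$ in $C^\infty$, and satisfies (ii), (iii).

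\textbf{Stage 2 (conformal correction).} Fix a cutoff $\chi\in C^\infty_c(B_g(p,2\rho))$ with $\chi\equiv 1$ on $B_g(p,\rho)$, and solve the linear Dirichlet problem
\begin{equation*}
8\Delta_{\tilde g_i} u_i - (1-\chi)R_{\tilde g_i}u_i = 0 \quad\text{in } \Omega,\qquad u_i = 1 \quad\text{on } \Sigma.
\end{equation*}
Since $\tilde g_i\to g$ in $C^\infty$ and $g$ is scalar flat, $R_{\tilde g_i}\to 0$ in $C^\infty$; standard maximum-principle and elliptic regularity arguments give a unique positive solution with $u_i\to 1$ in $C^{k,\alpha}(\ol\Omega)$ for every $k$. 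Setting $g_i := u_i^4 \tilde g_i$, the conformal scalar-curvature formula yields
\begin{equation*}
R_{g_i} = u_i^{-5}\bigl(R_{\tilde g_i}u_i - 8\Delta_{\tilde g_i}u_i\bigr) = u_i^{-4}\,\chi\,R_{\tilde g_i},
\end{equation*}
which is supported in $B_g(p,2\rho)$, giving (iv). The condition $u_i|_\Sigma\equiv 1$ preserves the induced metric, so (iii) carries over from $\tilde g_i$; the mean curvature of $\Sigma$ transforms as $H_{g_i} = H_{\tilde g_i} + 4\p_\nu u_i$, which is positive since $H_{\tilde g_i}$ is bounded below by a positive constant and $\p_\nu u_i\to 0$, giving (ii). On $B_g(p,3\rho)$ we have $\tilde g_i = g$, hence $g_i = u_i^4 g$ there, and the positivity of the mean curvature of $\p B_g(p,s)$ for $s<2\rho$ is stable under a $C^1$-small conformal factor, establishing (v). Finally, (vi) follows because $\gamma_i\to\gamma$ makes the isometric embedding in $\R^3$ converge (so $\int_\Sigma H_0^{(i)}\,d\sigma_i\to\int_\Sigma H_0\,d\sigma$, as in \cite{ST2004}), while $H_{g_i}\to H_g$ uniformly; thus $\mathfrak{m}_{BY}(\Sigma;\Omega,g_i)\to\mathfrak{m}_{BY}(\Sigma;\Omega,g)=0$.

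\textbf{Main obstacle.} I expect the delicate part to be Stage 1: exhibiting an explicit small perturbation that simultaneously lifts Gauss and mean curvatures to strictly positive without moving $g$ outside $\mathcal{N}$, and confirming the resulting $\tilde g_i\to g$ in $C^\infty$. Stage 2 is essentially a linear PDE exercise, and the verification of (i)--(vi) is then a bookkeeping of convergence statements.
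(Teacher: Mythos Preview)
Your plan has a genuine gap in Stage~2, precisely in the verification of (ii). You write that $H_{g_i}=H_{\tilde g_i}+4\partial_\nu u_i$ is positive ``since $H_{\tilde g_i}$ is bounded below by a positive constant and $\partial_\nu u_i\to 0$.'' But the positive lower bound on $H_{\tilde g_i}$ is \emph{not} uniform in $i$: it is of order $\e_i$ because your Stage~1 lift is an $O(\e_i)$ perturbation and $H_g$ vanishes somewhere on $\Sigma$. Meanwhile the scalar curvature $R_{\tilde g_i}$ created in Stage~1 is also $O(\e_i)$, so elliptic estimates give $|\partial_\nu u_i|=O(\e_i)$ as well. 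Both terms are of the same order with no a~priori control of the constants, so the inequality $H_{\tilde g_i}+4\partial_\nu u_i>0$ is not justified. (One could rescue this by a two-scale argument, lifting $H$ at rate $\e_i^{1/2}$ and the Gauss curvature at rate $\e_i$, but that is not what you wrote.)

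The paper sidesteps this competition entirely with a different, purely conformal construction. Using that $g$ is already scalar flat, it first sets $g_\e=v^4g$ where $\Delta_g v=\e\phi\ge 0$ in $\Omega$, $v=1$ on $\Sigma$, with $\phi$ a cutoff supported in $B(p,2\rho)$. Since $v$ is $g$-harmonic outside $B(p,2\rho)$, (iv) is automatic; and because $\Delta_g v\ge 0$ is not identically zero, the Hopf lemma forces $\partial_\nu v>0$ on all of $\Sigma$, so the mean curvature becomes \emph{strictly} positive with no competing term. Only afterwards is the Gauss curvature lifted, by a second conformal factor $w$ which is $g_\e$-harmonic (so (iv) survives) with boundary value $\exp(\tfrac12\tau\eta)$ for a function $\eta\le 0$ with $\Delta_\gamma\eta=-1$ near $\{K=0\}$; for $\tau$ small depending on $\e$, both (ii) and (iii) hold. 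The point you should take away is that the same conformal change that localizes the scalar curvature to $B(p,2\rho)$ also supplies, via the strong maximum principle, the strict sign on $\partial_\nu v$; there is no need for a direct geometric modification of the second fundamental form in Fermi coordinates.
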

\begin{proof} Let $\rho>0$ be small enough so that $\p B_g(p,s)$ is diffeomorphic to the sphere so that its mean curvature is larger than $1/s$ for all $0<s<2\rho$. Fix a smooth cutoff function $\phi\ge0$ so that $\phi=1$ in $B(p,\rho)$ and $\phi=0$ outside $B(p,2\rho)$. Let $v$ be the solution of $\Delta_g v=\e \phi$ in $\Omega$ and $v=1$ on $\Sigma$. Then for $\e>0$ small enough, $v>0$. Let $g_\e =v^4 g$. For $\e$ small enough, $g_\e$ satisfies:

\begin{itemize}
  \item $g_\e\to g$ in $C^\infty$ norm in $\ol \Omega$.
  \item The scalar curvature of $g_\e$ is zero outside $B(p,2\rho)$.
  \item The mean curvature of $\Sigma$ with respect to $g_\e$ is positive. This follows by strong maximum principle that $\frac{\p v}{\p \nu}>0$ where $\nu$ is the unit outward normal of $\Sigma$ with respect to $g$.
\end{itemize}
Since $v=1$ on $\Sigma$, the metrics induced by $g, g_\e$ are equal, and will be denoted by $\gamma$. In particular,  the Gaussian curvature of $\Sigma$ does not change. If the Gaussian curvature of $(\Sigma,\gamma)$ is positive, then $g_{\e}$ are the required metrics.
  Otherwise, we can find a smooth function $\eta$ on $\Sigma$ such that $\eta \le0$, $\Delta_\gamma\eta=-1$ in an open set containing $\{K=0\}$. For fixed $\e>0$, for $\tau>0$, and let $w$ be the solution of $\Delta_{g_\e} w=0$ in $\Omega$ so that $w=\exp(\frac12\tau \eta)$. Let $h_\tau= w^4g_\e$. Then
\begin{itemize}
  \item $h_\tau \to g_\e$ in $C^\infty$ norm in $\ol \Omega$ as $\tau \rightarrow 0$.
  \item The scalar curvature of $h_\tau$ is zero outside $B(p,2\rho)$.
  \item The mean curvature of $\Sigma$ is positive, provided $\tau$ is small enough.
  \item The Gaussian curvature of $\Sigma$ with respect to the metric induced by $h_\tau$ is positive provided $\tau$ is small enough.
\end{itemize}

From these, it is easy to see the lemma is true.

\end{proof}

 The following lemma is basically from \cite{HI}.
\begin{lem}\label{imcf1} Let $(\Omega,g)$, $\Sigma$ be as above.
 For any $p \in  \Omega $, there is a weak solution for the inverse mean curvature flow in $(\Omega, g)$ with $p$ as the initial data.
\end{lem}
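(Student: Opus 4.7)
The plan is to follow the elliptic regularization construction of Huisken--Ilmanen \cite{HI}, adapted to our compact manifold-with-boundary setting. First I would extend $(\Omega, g)$ smoothly across $\Sigma$ to a slightly larger compact Riemannian manifold $(\wt\Omega, \wt g)$ with $\Omega\subset\subset \wt\Omega$ and $\wt g|_\Omega=g$; such an extension exists since $\Sigma$ is smooth (attach a collar). This reduces the problem to constructing a weak IMCF from a point in $(\wt\Omega,\wt g)$, since the restriction to $\Omega$ of any such solution automatically satisfies the variational formulation of weak IMCF on $(\Omega,g)$.

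For each $\e>0$ smaller than the injectivity radius at $p$ in $\wt\Omega$, I would solve the standard Huisken--Ilmanen elliptic regularization problem in $\wt\Omega\setminus B_\e(p)$: for $\tau>0$, find $u_{\e,\tau}$ satisfying
$$
\div\!\left(\frac{\nabla u_{\e,\tau}}{\sqrt{|\nabla u_{\e,\tau}|^2+\tau^2}}\right)=\sqrt{|\nabla u_{\e,\tau}|^2+\tau^2}
$$
with $u_{\e,\tau}=0$ on $\p B_\e(p)$ and $u_{\e,\tau}=L_\e$ on $\p\wt\Omega$, where $L_\e$ is chosen large enough that the level sets relevant to us stay away from $\p\wt\Omega$. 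Sending $\tau\to 0$ yields a proper weak IMCF solution $u_\e$ on $\wt\Omega\setminus B_\e(p)$ with initial surface $\p B_\e(p)$, following \cite[Sect.~3]{HI} verbatim.

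Next, I would normalize by setting $v_\e:=u_\e+2\log\e$, so that near $p$ one has $v_\e(x)\approx 2\log d_{\wt g}(x,p)$. The interior gradient estimate of \cite{HI} gives a uniform bound $|\nabla v_\e|\le C/d_{\wt g}(\cdot, p)$ on compact subsets of $\wt\Omega\setminus\{p\}$, independent of $\e$. Extracting a subsequential limit $v$ as $\e\to 0$ produces a proper weak solution of IMCF on $\wt\Omega$ with point initial data at $p$, and its restriction to $\Omega$ is the desired weak solution.

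The main point to verify is that the Huisken--Ilmanen limiting procedure carries over once $(\wt\Omega,\wt g)$ and the auxiliary Dirichlet value $L_\e$ are fixed; this is where I would spend the most care, checking that the local area and gradient estimates of \cite{HI} remain valid because they are interior estimates. Since only the level sets $\{v<t\}$ for $t$ sufficiently negative (hence contained in a small geodesic ball around $p$) enter the rigidity argument, those level sets lie entirely in $\Omega$ and are independent of the choice of extension; this eliminates any ambiguity coming from the extension and delivers the lemma.
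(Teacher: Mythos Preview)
Your outline follows the Huisken--Ilmanen scheme and the normalization $v_\e=u_\e+2\log\e$ together with the $|\nabla v_\e|\le C/d(\cdot,p)$ bound is exactly what the paper uses. The substantive difference is in the extension step: the paper extends $(\Omega,g)$ not to a slightly larger \emph{compact} manifold with boundary, but to a \emph{complete} manifold $(M,\hat g)$ that is Euclidean outside a compact set, and then invokes \cite[Theorem~3.1]{HI} directly for each small ball $B_r(p)$.

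This difference matters, and your version has a gap. You assert that for each $\e>0$ one can solve the elliptic regularization Dirichlet problem on $\wt\Omega\setminus B_\e(p)$ with $u_{\e,\tau}=L_\e$ on $\p\wt\Omega$, ``where $L_\e$ is chosen large enough'', by following \cite[Sect.~3]{HI} verbatim. But Huisken--Ilmanen's existence argument requires a weak \emph{subsolution} at infinity to serve as the outer barrier and to supply the $C^0$ and boundary gradient estimates needed for the continuity method; in their asymptotically flat setting the expanding coordinate spheres play this role. On a compact manifold with boundary no such subsolution is available for free, and in fact for $L_\e$ larger than the time at which the flow from $\p B_\e(p)$ first touches $\p\wt\Omega$, the Dirichlet problem has no solution at all---so ``$L_\e$ large enough'' goes in the wrong direction. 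The paper's extension to an asymptotically Euclidean end is precisely what furnishes the subsolution required by \cite[Theorem~3.1]{HI}; it is not a cosmetic choice. Your route can be repaired either by making the same asymptotically Euclidean extension, or by producing by hand a subsolution near $\p\wt\Omega$ (e.g.\ from a mean-convex foliation of a collar) and then taking $L_\e$ to be the value this subsolution attains on $\p\wt\Omega$, but that construction is nontrivial and is not indicated in your proposal.
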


\begin{proof}

Let $U$ be a small neighborhood of $\p\Omega$, then extend $\Omega \cup U $ to be Euclidean near infinity, the resulting metric is denoted by $\hat g$.

Let us consider the inverse mean curvature flow (IMCF) in $(M,\hat g)$ with $\partial B_r(p)$ as the initial data where $r>0$ is small enough. By Theorem 3.1 in \cite{HI}, there is a weak solution $u_{r}$ to this IMCF with $u_{r}|_{\partial B_r(p)}=0$ and

$$
|\nabla u_{r}|(x)\leq \sup_{\p B_r (p) \cap B_{\rho}(x) } H_+ +\frac{C}{\rho},
$$
for any $0<\rho\leq \sigma(x)$, here  $C$ is a universal constant independent on $\rho$ and $r$, $\sigma(x)$ is defined in Definition 3.3 in \cite{HI}, i.e. for any $x\in \Omega$, let  $\tau(x)\in (0, \infty]$  be the supremum of radii $r$ such that $B_r (x)\subset \Omega$, and

$$
Rc\geq-\frac1{1000r^2} ~\text{in} ~ B_r (x),
$$
and there is a $C^2$ function $p$ on $B_r (x)$ such that $p(x)=0$, $p\geq d^2(, x)$, and $|\nabla p|\leq 3 d(, x)$, $\nabla^2 p \leq 3g$ on $B_r (x)$, define $\sigma(x)=\min\{\tau(x), d(x, \p\Omega)\}$.  Let $\Omega'\subset \subset \Omega$ with $dist(\p \Omega', \p\Omega )$ being any fixed small number and $p\in \Omega'$. Without loss of the generality, it suffices to consider the case that $x\in \Omega' $, so, we may assume $\sigma(x)\geq \sigma_0$ for any $x\in \Omega' $, here $\sigma_0$ is a fixed number  depends only  on $dist(\p \Omega', \p\Omega )$ and $(\Omega ,g)$.

Let us choose $r$ small enough so that $\sup_{\p B_r (p) } H_+ \leq \frac 3r$. Now, we claim that for any $x\in \Omega' $

 \begin{equation}\label{estimate1}
|\nabla u_{r}|(x)\leq \frac{C}{d(x,p)},
\end{equation}
here $C $ is a  universal constant independent on $r$, $d(x,p )$ is the distance function to $p$  with respect to the metric $g$.

In fact, if $d(x,p)\leq 4r$, then we take $\rho =\frac{r}2$, here we assume $r\leq \frac{\sigma_0}2$, we get (\ref{estimate1}); if $d(x,p)> 4r$, let $\rho=\min \{\frac12 dist(x,p), \frac{\sigma_0}2\}$, together with the fact $dist(x,p)\leq \Lambda \sigma_0$, where $\Lambda$ is a universal constant, we still get (\ref{estimate1}).

On the other hand,  together with Theorem 2.1 in \cite{HI} and the remarks following it, we know that by taking a subsequence of $\{ u_{r}\}$, denoted by $\{ u_{r_i}\}$, there is a constant $C_i$  so that $\{ u_{r_i}-C_i\}$ converges to the weak solution of IMCF $-\infty <u$  in $(\Omega', g)$ with $p$ as the initial data. Note that the mean curvature of  $\partial B_r(p)$ is positive for all $r\leq \delta$, we see that the level set of $u$ in $ B_\delta (p) \subset \subset\Omega' $ cannot jump, and

$$
|\nabla u|(x)\leq \frac{C}{d(x,p)},
$$

and $-\infty <u\leq t_0$, here $t_0$ is a universal constant.
\end{proof}

Let us first recall the definition of {\it minimizing hull} in $\Omega$.
A subset $E$ of $\Omega$   with locally finite perimeter said to be a minimizing
hull in $\Omega$ if $|\p^*E\cap K|\le |\p^*F\cap K|$ for any  set $F\subset \Omega$ with locally finite perimeter such that $F\supset E$ and  $F \setminus E \Subset\Omega$  and for any compact set $K$ with $F \setminus E \subset K \subset\Omega$.
Here $\p^*E, \p^*F$ are  the reduced boundaries of E and F respectively.

By the proof in  \cite[Theorem 2.5]{ST}, we see that for $t$ small enough,  the slice $N_t=\p\{u<t\}$ of the weak IMCF in Lemma \ref{imcf1} is the boundary of a {\it minimizing hull}  in $(\Omega,g)$ with $C^{1,\a}$ smooth and $\int_{N_t}|A|^2 d\sigma <\infty$, and $\mathfrak{m}_H(N_t)\geq 0$.

We are ready to prove the rigidity part of Theorem \ref{main-quasipositive}.

\begin{proof}[Proof of Theorem \ref{main-quasipositive} (ii) \underline{Rigidity}] Let $p\in \Omega$. Suppose $g$ is not flat near $p$. Choose $r>0$ be small enough with $B(p,2r)\Subset \Omega$,  so that $\p B(p,s)$ is a sphere with mean curvature at least $1/s$ for all $s<2r$.  Then by Lemma \ref{imcf1} and \cite{HI},
  one can find a solution to the IMF given by a locally Lipschitz function $u$, so that for some $a$, the following are true: (i) $E_t=\{u<t\}$ is precompact in  $B(x,r)$ for $t<a$; (ii) $\p E_t$ is connected; (iii) $E_t$ is a minimizing hull in $(\Omega,g)$; (iv) $\mathfrak{m}_H(\p E_t,g)>0$, for $t<a$.

Fix $t_0<a$ so that $\mathfrak{m}_H(\p E_{t_0},g)\ge b$ for some $b>0$. In the following we denote $E_{t_0}$ by $E$. For any $\theta>0$ small enough, we can find   $E\subset F\Subset B(x,r)$  such that
\be
|\p E|_g\le |\p F|_g\le |\p E|_g+\theta; \  \mathfrak{m}_H(\p F) \ge\mathfrak{m}_H(\p E)-\theta>0.
\ee
Moreover $\p F$ is smooth. Note that $F$ depends on $\theta$.

Since $p\in E_{t_0}$ which is open, we can find   $r>\rho>0$ such that   $B(p,2\rho)\Subset E$.

Next, we want to approximate $g$. By the Lemma \ref{l-approx-1} , for any $\e>0$  small enough, we can find a smooth metric $g_\e$ on $\ol\Omega$ so that (i) $||g-g_\e||_{C^4}\le \e$;
  (ii)  $\Sigma$ has positive mean curvature $H_\e$ with respect to $g_\e$; (iii)  The Gaussian curvature of $(\Sigma,g_\e|_{T(\Sigma)}) $ has positive Gaussian curvature.
      (iv) the scalar curvature of $g_\e$ is zero outside $B(p,2\rho)$;
      (v) The mean curvature of $\p B(p,s)$ with respect to $g_\e$ is positive for all $s<2r$;
           (vi)   $\mathfrak{m}_{BY}(\Sigma, \Omega,g_\e)\le \e$; (vii) $
  |\p F|_{g_\e}  \le |\p E|_g+\theta+\e$, $  \mathfrak{m}_H(\p F,g_\e) \ge\mathfrak{m}_H(\p E,g))-\theta-\e>0$.

By (ii), (iii), we can glue $\Omega$ to the exterior of the a convex set in $\R^3$ so that the scalar curvature outside the convex set is zero and is asymptotically flat. Denote the manifold by $M$. We still denote this metric as $g_\e$. Note that $g_\e$ has zero scalar curvature outside $B(x,2r)$. However,  $g_\e$ may have negative scalar curvature inside $B(p,2\rho)$. By the monotonicity in qausi-spherical metric \cite{ST2}, using the Lemma \ref{l-approx-1} (vi) we may choose $g_\e$ so that
$$
\mathfrak{m}_{ADM}(g_\e)\le \e.
$$

Fix such an $\e$.
Using the method of Miao \cite{M}, for $\tau>0$ small enough, we can find metrics $h_\tau$ so that $h_\tau=g_\e$ outside $\{x\in M| d_{g_\e}(x,\Sigma)<\tau\}$ and the scalar curvature inside $\{x\in M| d_{g_\e}(x,\Sigma)<\tau\}$ is uniformly bounded. Let $R_\tau$ be the scalar curvature of $g_\e$. One can find a positive solution of
$$
\wt R_\tau u-8\Delta_{g_\e}u=0
$$
with $u\to 1$ near infinity. Here $\wt R_\tau=R_\tau$ in $\{x\in M| d_{g_\e}(x,\Sigma)<\tau\}$
and $\wt R_\tau=0$ outside this set. Note that $\wt R_\tau$ is smooth. Hence one can   approximate $g_\e$  by a smooth metrics $h_\tau=u^4g_\e$ on the manifold so that, $h_\tau$ has zero scalar curvature outside $B(p,2\rho)$ and
$$
\mathfrak{m}_{ADM}(h_\tau)\le 2\e.
$$
Moreover, $h_\tau\to g_\e$ uniformly in $M$, $h_\tau\to g_\e$ in $C^\infty$ norm in any compact set away from $\Sigma$.

  Note that the mean curvature of $\Sigma$ with respect to  $g_\e$ is positive and $\mathfrak{m}_H(\p F,g_\e)>0$, one can find   $F_\e$ which is  the minimizing hull of $F$ with respect to  $g_\e$ inside $\Omega$. $F_\e$ exists because the mean curvature of $\Sigma=\p\Omega$ is positive with respect to $g_\e$. Then $F_\e\Subset \Omega$ and is connected because  $M$ is homeomorphic to $\R^3$. Using the fact that the scalar curvature of $g_\e$ is zero outside $\p F$, one   can proceed as in the proof \cite[Theorem 3.1]{ST2004}, to obtain

$$
2\e\ge\mathfrak{m}_{ADM}(g_\e)\ge \mathfrak{m}_H(\p F_\e,g_\e).
$$

On the other hand, the mean curvature of $\p F_\e$ is zero on $\p F_\e\setminus \p F$ is equal to the mean curvature of $\p F$ on $\p F_\e\cap \p F$
\bee
\begin{split}
\mathfrak{m}_H(\p F_\e,g_\e)=&\sqrt{\frac{|\p F_\e|_{g_\e}}{16\pi}} \lf(1-\frac1{16\pi}\int_{\p F_\e }H^2 d\sigma_{g_\e}\ri)\\
\ge &\sqrt{ \frac{|\p F_\e|_{g_\e}}{16\pi}} \lf(1-\frac1{16\pi}\int_{\p F }H^2 d\sigma_{g_\e}\ri)\\
=&\sqrt{ \frac{|\p F_\e|_{g_\e}}{|\p F|_{g_\e}}}\mathfrak{m}_H(\p F,g_\e)\\
\ge & \sqrt{ \frac{|\p F_\e|_{g_\e}}{|\p F|_{g_\e}}}(\mathfrak{m}_H(\p E,g)-\theta-\e).
\end{split}
\eee
Now
\bee
\begin{split}
|\p F|_{g_\e}\le  & \lf(|\p E|_g+\theta+\e\ri)\\
\le & \lf(|\p F_\e|_g+\theta+\e\ri)\\
\le & (1+\e)\lf(|\p F_\e|_{g_\e}+\theta+\e\ri)
\end{split}
\eee
and
\bee
|\p F|_{g_\e}\ge (1-\e)|\p F|_g\ge (1-\e)|\p E|_g
\eee
here we may assume that $(1+\e)^{-1}g\le g_\e\le (1+\e)g$. Hence
\bee
\begin{split}
\frac{|\p F_\e|_{g_\e}}{|\p F|_{g_\e}}\ge &\frac1{1+\e}-(\theta+\e)\cdot\frac1{|\p F_\e|_{g_\e}}\\
\ge& \frac1{1+\e}-(\theta+\e)\cdot\frac1{(1-\e)|\p E|_g}
\end{split}
\eee
Since
$(\mathfrak{m}_H(\p E,g)-\theta-\e)>0$ provided $\theta, \e$ are small enough, we have
\bee
2\e\ge \lf(\frac1{1+\e}-(\theta+\e)\cdot\frac1{(1-\e)|\p E|_g}\ri)^\frac12(\mathfrak{m}_H(\p E,g)-\theta-\e).
\eee
Let $\e\to 0$ and then let $\theta\to0$, we have
$$
0\ge \mathfrak{m}_H(\p E,g)>0.
$$
This is a contradiction.

\end{proof}

\begin{rem} It is not difficult to see that by the arguments in the above proof of rigidity, we may also get
$\mathfrak{m}_{BY}(\Sigma;\Omega,g)\geq 0$ in case $\Omega$ is homeomorphic to a ball.
\end{rem}

\end{document}